\theoremstyle{plain}
\newtheorem{theorem}{Theorem}
\newtheorem{lemma}[theorem]{Lemma}
\theoremstyle{definition}
\theoremstyle{remark}
\newtheorem{conjecture}[theorem]{Conjecture}
\newcommand{\seqnum}[1]{\href{https://oeis.org/#1}{\rm \underline{#1}}}
\newcommand{\mylabel}[2]{#2\def\@currentlabel{#2}\label{#1}}
\begin{document}
\tikzset{mystyle/.style={matrix of nodes,
        nodes in empty cells,
        row 1/.style={nodes={draw=none}},
        row sep=-\pgflinewidth,
        column sep=-\pgflinewidth,
        nodes={draw,minimum width=1cm,minimum height=1cm,anchor=center}}}
\tikzset{mystyleb/.style={matrix of nodes,
        nodes in empty cells,
        row sep=-\pgflinewidth,
        column sep=-\pgflinewidth,
        nodes={draw,minimum width=1cm,minimum height=1cm,anchor=center}}}

\title{On a conjecture of McNeil}

\author[SELA FRIED]{Sela Fried$^{*}$}
\thanks{$^{*}\,$Department of Computer Science, Israel Academic College,
52275 Ramat Gan, Israel. 
\href{mailto:friedsela@gmail.com}{\tt friedsela@gmail.com}}

\maketitle

\begin{abstract}
Suppose that the $n^2$ vertices of the grid graph $P_n^2$ are labeled, such that the set of their labels is $\{1,2,\ldots,n^2\}$. The labeling induces a walk on $P_n^2$, beginning with the vertex whose label is $1$, proceeding to the vertex whose label is $2$, etc., until all vertices are visited. The question of the maximal possible length of such a walk, denoted by $M(P_n^2)$, when the distance between consecutive vertices is the Manhattan distance, was studied by McNeil, who, based on empirical evidence, conjectured that $M(P_n^2)=n^3-3$, if $n$ is even, and $n^3-n-1$, otherwise. In this work we study the more general case of $P_m\times P_n$ and capture $M(P_m\times P_n)$, up to an additive factor of $1$. This holds, in particular, for the values conjectured by McNeil.
\bigskip

\noindent \textbf{Keywords:} graph labeling, permutation.
\smallskip

\noindent
\textbf{Math.~Subj.~Class.:} 05C78, 05A05.
\end{abstract}

\section{Introduction}
Let $m$ and $n$ be two natural numbers and consider the grid graph $P_m\times P_n$. Assume that the $mn$ vertices of $P_m\times P_n$ are labeled such that the set of their labels is $\{1,2,\ldots,mn\}$. The labeling induces a walk  on $P_m\times P_n$, beginning with the vertex whose label is $1$, proceeding to the vertex whose label is $2$, etc., until all vertices are visited. This work is concerned with the following question: What is the maximal possible length of such a walk, denoted by $M(P_m\times P_n)$, if the distance between consecutive vertices is the Manhattan distance?

This question was studied in two special cases: The case $m=1$ corresponds to standard permutations and it was shown by Bulteau et al.\ \cite{bulteau2021disorders} that $M(P_n)=\lfloor n^2/2\rfloor -1$. The second case, in which $m=n$, was studied by McNeil (see sequence \seqnum{A179094} in  \cite{OL}). Based on empirical evidence, McNeil made the following conjecture.
\begin{conjecture}
Assume that $n\geq 2$. Then $$M(P_n^2)=\begin{cases}n^3-3, &\textnormal{if } n \textnormal{ is even} ;\\n^3-n-1, & \textnormal{otherwise}.\end{cases}$$
\end{conjecture}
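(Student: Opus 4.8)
The plan is to exploit the fact that the Manhattan distance splits across the two coordinates. Writing the vertex with label $k$ as a pair $(x_k,y_k)$, the length of the induced walk is $\sum_{k=1}^{mn-1}|x_k-x_{k+1}|+\sum_{k=1}^{mn-1}|y_k-y_{k+1}|$, that is, the total variation of the row-sequence $(x_k)_k$ plus that of the column-sequence $(y_k)_k$. Here $(x_k)_k$ is an arrangement of the multiset in which each value of $\{1,\dots,m\}$ occurs exactly $n$ times, and $(y_k)_k$ one in which each value of $\{1,\dots,n\}$ occurs exactly $m$ times; the two are coupled only by the requirement that the pairs $(x_k,y_k)$ be pairwise distinct. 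So I would first solve the purely one-dimensional problem: find the maximum total variation of a sequence in which each of the values $1,\dots,M$ appears exactly $c$ times.

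For this one-dimensional bound I would use a level-crossing decomposition. Since $|a-b|=|\{\,s\in\Z:\min(a,b)\le s<\max(a,b)\,\}|$, the total variation equals $\sum_{t=1}^{M-1}C_t$, where $C_t$ counts the consecutive pairs straddling the level $t+\tfrac12$. The sequence has $ct$ entries that are $\le t$ and $N-ct$ that exceed $t$ (with $N=Mc$), and a binary-alternation argument bounds $C_t\le 2\min(ct,N-ct)$, with a forced loss of $1$ at any balanced threshold where $ct=N-ct$. Because $\sum_{t=1}^{M-1}\min(t,M-t)=\lfloor M^2/4\rfloor$, the row- and column-contributions are at most $2n\lfloor m^2/4\rfloor$ and $2m\lfloor n^2/4\rfloor$, each reduced by $1$ when the corresponding dimension is even. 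A short parity argument treats the ends: a value sitting at an end of a sequence contributes only one crossing instead of two, and for odd $M$ with $c\ge2$ this is avoided by placing the median value at both ends, whereas for $c=1$ it cannot be. Specialising to $m=n$, this yields the upper bounds $n^3-2$ for even $n$ and $n^3-n$ for odd $n$.

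For the matching lower bound I would exhibit an explicit labeling in which each coordinate oscillates as violently as possible between its two extremes. Concretely I would use a boustrophedon-type ordering that repeatedly jumps between the topmost and bottommost rows while sweeping across the columns, arranged symmetrically in the other coordinate and split into cases according to the parities of $m$ and $n$. The walk length is then read off directly from the coordinate decomposition, with the ordering designed so that every threshold other than the unavoidable balanced one is crossed the maximal number of times; for $m=n$ this realises exactly the values $n^3-3$ and $n^3-n-1$.

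The genuine obstacle is the final additive unit. The one-dimensional extremal sequences are essentially rigid---to cross every level the maximal number of times, the row-sequence must alternate almost perfectly---but in two dimensions the start and end vertices are shared by both coordinate sequences, and the pairs $(x_k,y_k)$ must all be distinct. These constraints forbid the row- and column-sequences from attaining their individual maxima at once: already on the $2\times2$ grid, forcing both coordinate sequences to alternate perfectly would repeat a vertex, costing one crossing. I expect this coupling to cost exactly one unit in general, so that $M(P_m\times P_n)$, and in particular $M(P_n^2)$, equals the conjectured value. Proving that the coupling loss is at least one---a statement the local, per-threshold level-crossing bound cannot detect---is the crux, and it is exactly this unit that separates the upper bound above from McNeil's conjectured formula, leaving the answer determined only up to an additive $1$.
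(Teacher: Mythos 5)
Your proposal follows essentially the same route as the paper: splitting the Manhattan length of the walk into the total variations of the row and column sequences, bounding each via the multiset generalization of the one-dimensional result of Bulteau et al.\ (your level-crossing argument is in effect a sketch of the omitted proof of Lemma~\ref{lem;ga10}), and matching the bound with explicit zigzag labelings attaining $n^3-3$ (even $n$) and $n^3-n-1$ (odd $n$), exactly as in Lemmas~\ref{lem;888} and~\ref{lem;889}. Be aware, though, that the statement is a conjecture the paper itself does not resolve: like you, the author only pins $M(P_n^2)$ inside $\{n^3-3,\,n^3-2\}$, respectively $\{n^3-n-1,\,n^3-n\}$ (Theorem~\ref{thm;ag1}), and the ``coupling loss'' of one unit that you correctly single out as the crux is precisely what remains open there as well.
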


Our results are summarized in the following theorem. In particular, it follows from them that the values conjectured by McNeil are exact, up to an additive constant of $1$.

\begin{theorem}\label{thm;ag1}
Assume that $m,n\geq 3$. Then
\begin{enumerate}
\item If $m$ and $n$ are both even then $M(P_m\times P_n)\in\{r,r+1\}$, where $r=mn(m+n)/2 - 3$.
\item If $m$ and $n$ are both odd then $M(P_m\times P_n)\in\{r,r+1\}$, where $r=mn(m+n)/2-(m+n)/2-1$.    
\item If $m$ is odd and $n$ is even then $M(P_m\times P_n)=mn(m+n)/2-n/2-1$. 
\end{enumerate} Furthermore, for every $n\geq 2$, we have $M(P_2\times P_n)\in\{r,r+1\}$, where $r=(n+1)^2-4$.
\end{theorem}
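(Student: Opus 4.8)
The plan is to exploit the separability of the Manhattan metric. Writing the walk as $v_1,\ldots,v_{mn}$ with $v_k=(x_k,y_k)$, the length splits as $L=L_x+L_y$, where $L_x=\sum_{k=1}^{mn-1}|x_k-x_{k+1}|$ and $L_y=\sum_{k=1}^{mn-1}|y_k-y_{k+1}|$. The horizontal coordinates $x_1,\ldots,x_{mn}$ form an arrangement of the multiset in which each of $1,\ldots,m$ occurs exactly $n$ times, and symmetrically for the $y_k$. Hence $M(P_m\times P_n)\le \max L_x+\max L_y$, where each maximum ranges over all arrangements of the respective multiset, which reduces the upper bound to a one-dimensional question.

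For the one-dimensional bound I would use a threshold (level-set) decomposition: for integers $a,b$ one has $|a-b|=\#\{t:\min(a,b)\le t<\max(a,b)\}$, so that $L_x=\sum_{t=1}^{m-1}c_t$, where $c_t$ counts the consecutive pairs straddling the cut between $\{x\le t\}$ and $\{x>t\}$. Since that cut has $nt$ elements on one side and $n(m-t)$ on the other, $c_t$ is at most the maximal number of alternations of a binary string with these block sizes, namely $\min\!\big(2n\min(t,m-t),\,mn-1\big)$; the cap $mn-1$ is active only at the central cut $t=m/2$, and only when $m$ is even. Because this bounds each summand of a fixed arrangement, summing over $t$ gives the valid bounds $\max L_x\le n(m^2-1)/2$ for $m$ odd and $\max L_x\le nm^2/2-1$ for $m$ even, and symmetrically for $L_y$. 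Adding the two produces exactly $r+1$ in cases (1) and (2), exactly $r$ in case (3), and $(n+1)^2-3=r+1$ for $P_2\times P_n$. This already establishes the upper halves of all four claims and makes (3) exact once a matching construction is supplied.

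The substance of the theorem is the lower bound: for each parity pattern I must exhibit a single labeling whose induced $x$- and $y$-sequences \emph{simultaneously} achieve, or come within $1$ of, their individual maxima, for then $L=L_x+L_y\ge r$. The natural construction is a doubly-alternating traversal that repeatedly jumps between low and high rows while also swinging between low and high columns; concretely one interleaves, in both coordinates at once, the one-dimensional extremal pattern (which, as for the multiset $\{1,1,2,2,3,3\}$ realized by $2,1,3,1,3,2$, keeps the middle values at the ends and alternates the outer values in the interior). I would then verify that this walk attains the per-cut maxima $c_t$ for every $t$ in both directions. The delicate points, which I expect to be the main obstacle, are the central cuts: when $m$ (or $n$) is even the two halves of that coordinate are balanced, so perfect alternation across the middle is forced, and meeting this in both coordinates at once is exactly where the parities interact. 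When one of $m,n$ is even and the other odd the two demands are compatible and the construction meets the upper bound, giving equality in (3); when $m,n$ share parity a single unavoidable defect appears at the center, which is why (1), (2) and the $P_2\times P_n$ family are pinned down only to $\{r,r+1\}$.

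Finally, the $P_2\times P_n$ statement would be handled by the same scheme with $m=2$: the $x$-multiset is $\{1,2\}$ with each multiplicity $n$ and the $y$-multiset is $\{1,\ldots,n\}$ with each multiplicity $2$, so the one-dimensional bounds give $\max L_x\le 2n-1$ and $\max L_y\le n^2-1$, whence $M(P_2\times P_n)\le (n+1)^2-3$. An explicit two-row zigzag achieving $(n+1)^2-4$ then closes the gap to $\{r,r+1\}$, and this argument remains valid already at $n=2$, unlike the three main cases which require $m,n\ge 3$.
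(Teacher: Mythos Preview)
Your upper bound is essentially the paper's argument. The paper separates the Manhattan length into its two coordinate contributions and then invokes a multiset-permutation bound (its Lemma~\ref{lem;ga10}, stated without proof) giving $\max L_x\le 2n\lfloor m/2\rfloor\lceil m/2\rceil-1_{m\text{ even}}$; your level-set/threshold computation of $\sum_t c_t$ is precisely a self-contained proof of that lemma and yields the identical numbers $n(m^2-1)/2$ and $nm^2/2-1$. So on the upper-bound side you match the paper, with the bonus that you actually justify the multiset bound rather than citing it.

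The gap in your proposal is the lower bound. You correctly identify this as ``the substance of the theorem'', but what you offer is a description of a strategy (``doubly-alternating traversal \ldots\ I would then verify''), not a construction. The paper, by contrast, writes down four explicit labelings (Lemmas~\ref{lem;222}--\ref{lem;8899}), one for each parity pattern, and claims the stated walk lengths. These labelings are not entirely obvious: in particular the both-odd case (Lemma~\ref{lem;889}) has an asymmetric block structure with a distinguished middle column and middle row that is not simply ``alternate low and high in both coordinates''. Your heuristic that the same-parity cases incur ``a single unavoidable defect at the center'' is plausible intuition for why the construction lands at $r$ rather than $r+1$, but it is not a substitute for exhibiting a labeling and computing its length. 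Until you produce the four explicit bijections (or at least one parametric family per parity case) and check that each attains the claimed value, the lower-bound half of the theorem---and hence the theorem itself---remains unproved in your write-up.
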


\section{Main results}

Let $m,n\geq 2$ be two natural numbers to be used throughout this work. We write $[n]=\{1,2,\ldots,n\}$ and  identify the vertices of $P_m\times P_n$ with the set $V=\{(i,j)\;:\;i\in [m], j\in [n]\}$. The proof of Theorem \ref{thm;ag1} consists of an upper bound and a lower bound for $M(P_m\times P_n)$. 

\subsection{The upper bound}

A key ingredient in the proof of the upper bound is the following generalization of \cite[Theorem 8]{bulteau2021disorders} to $m$ copies of $[n]$. We omit its proof, which would spread along several pages, but is almost identical to the one provided by \cite{bulteau2021disorders}.

\begin{lemma}\label{lem;ga10}
Let $S(m,n)$ be the set of permutations of the elements of the multiset containing $m$ copies of $[n]$. Then 

\[
\max\left\{\sum_{i=1}^{mn-1}|\sigma_{i+1}-\sigma_i|\;:\;(\sigma_1,\ldots,\sigma_{mn})\in S(m,n)\right\}=
2m\left\lfloor \frac{n}{2}\right\rfloor \left\lceil\frac{n}{2}\right\rceil-1_{n \textnormal{ is even}},
\] where $1_{n \textnormal{ is even}}$ is $1$ if $n$ is even and $0$ otherwise.
\end{lemma}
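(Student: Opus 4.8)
The plan is to prove the equality by a matching upper and lower bound, with both sides expressed through a decomposition of the objective into \emph{threshold crossings}. For a threshold $k\in\{1,\ldots,n-1\}$, call a consecutive pair $(\sigma_i,\sigma_{i+1})$ a $k$-crossing if exactly one of $\sigma_i,\sigma_{i+1}$ is $\le k$. Since $|\sigma_{i+1}-\sigma_i|$ equals the number of thresholds $k$ with $\min(\sigma_i,\sigma_{i+1})\le k<\max(\sigma_i,\sigma_{i+1})$, I would first record the identity $\sum_{i=1}^{mn-1}|\sigma_{i+1}-\sigma_i|=\sum_{k=1}^{n-1}X_k$, where $X_k$ is the number of $k$-crossings of the sequence. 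This reduces the whole problem to bounding each $X_k$ in isolation and then asking whether all the bounds can be met at once.

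For the upper bound I would fix $k$ and read the sequence as a binary word, marking each entry \emph{low} (value $\le k$) or \emph{high} (value $>k$); there are $mk$ lows and $m(n-k)$ highs. Then $X_k$ is exactly the number of letter changes in this word, and the elementary fact that a binary word with $a$ copies of one letter and $b$ of the other has at most $2\min(a,b)$ changes if $a\ne b$ and $a+b-1$ changes if $a=b$ yields $X_k\le 2m\min(k,n-k)$ when $k\ne n-k$, and $X_k\le mn-1$ when $k=n-k$. The second case occurs only for even $n$, at $k=n/2$. Summing and evaluating $\sum_{k=1}^{n-1}2m\min(k,n-k)$, which equals $2m\lfloor n/2\rfloor\lceil n/2\rceil$ in both parities, gives the claimed value; the $-1$ for even $n$ comes precisely from the deficient middle threshold.

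For the lower bound I would produce one sequence that is simultaneously optimal at every threshold. Tracking the equality conditions in the binary-word estimate shows that achieving every bound at once forces: no two low entries adjacent and no two high entries adjacent (relative to the median split), together with endpoints equal to the central value for odd $n$, and equal to $n/2$ and $n/2+1$ for even $n$. I would then exhibit such a sequence explicitly. For even $n=2p$ the perfectly alternating word $h_1\ell_1h_2\ell_2\cdots h_{pm}\ell_{pm}$ with $h_1=p+1$ and $\ell_{pm}=p$ works, and for odd $n=2p+1$ one interleaves the $pm$ lows and $pm$ highs and borders the result with the $m$ copies of the central value $p+1$ (here $m\ge 2$ is used, to supply two central copies for the two endpoints; this is exactly why the formula fails for $m=1$ and odd $n$). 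A direct check then confirms that each $X_k$ attains its bound.

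The main obstacle is the lower bound. The upper bound is a clean threshold-by-threshold estimate, but the construction must hit every per-threshold maximum at the same time, so the real work is (a) extracting the exact equality conditions from the binary-word bound, and (b) verifying that the proposed interleaving satisfies all of them simultaneously, including the delicate endpoint constraints that single out the central value(s). Carrying out the parity split and treating the boundary threshold $k=n/2$ carefully is where the several pages of the analogous argument in \cite{bulteau2021disorders} are spent.
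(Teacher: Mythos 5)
Your proof is correct, and it takes a genuinely different route from the one the paper relies on. The paper omits the proof and appeals to the several-page argument of \cite{bulteau2021disorders} (their Theorem 8), which proceeds structurally: one first proves an exact evaluation of the objective for ``bipartite'' sequences, i.e.\ those alternating around the median, of the form $2m\lfloor n/2\rfloor\lceil n/2\rceil$ minus endpoint correction terms $\left|\sigma_1-\lceil n/2\rceil\right|+\left|\sigma_{mn}-\lceil n/2\rceil\right|$, and then shows that some optimal sequence is bipartite, which is where most of the case analysis is spent. Your threshold-crossing decomposition $\sum_{i=1}^{mn-1}|\sigma_{i+1}-\sigma_i|=\sum_{k=1}^{n-1}X_k$ replaces that structural step with a bound valid for \emph{every} sequence: the binary-word estimate gives $X_k\le 2m\min(k,n-k)$ for $k\neq n/2$ and $X_k\le mn-1$ at the balanced threshold, so no ``optima are bipartite'' argument is needed, and the $-1_{n\text{ even}}$ is isolated transparently in the single deficient middle threshold. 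Your constructions are also correct: for even $n$ the alternating word with endpoints $n/2+1$ and $n/2$ is tight at every $k$ (alternation handles all non-adjacency constraints at once, leaving only the endpoint conditions), and for odd $n$ the interleaving bordered by central copies works, with the unplaced $m-2$ central copies harmlessly stacked at an end since the central value is never a minority letter. A further merit of your route is that it pinpoints exactly where $m\ge 2$ enters for odd $n$; indeed the lemma as stated fails for $m=1$ and odd $n$, where the maximum is $2\lfloor n/2\rfloor\lceil n/2\rceil-1$ by the result of \cite{bulteau2021disorders} quoted in the introduction, so the paper's standing assumption $m,n\ge 2$ is essential and your remark makes this visible. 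One small simplification: for the lower bound you never need the necessity direction of your equality analysis --- exhibiting the two sequences and verifying each $X_k$ meets its bound suffices --- though deriving the forced endpoint values as you do usefully explains why the constructions must look the way they do. Net effect: your argument is shorter and self-contained where the cited proof is long and structural, at the cost of the exact endpoint-penalty formula, which the structural approach yields as a by-product.
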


\begin{lemma}
Let $\sigma\colon V\to [mn]$ be a bijection. Then \[\sum_{t=1}^{mn-1} d( \sigma^{-1}(t),\sigma^{-1}(t+1))\leq 2n\left\lfloor \frac{m}{2}\right\rfloor \left\lceil\frac{m}{2}\right\rceil-1_{m \textnormal{ is even}}+2m\left\lfloor \frac{n}{2}\right\rfloor \left\lceil\frac{n}{2}\right\rceil-1_{n \textnormal{ is even}},\] where $d((i_1,j_1),(i_2,j_2))=|i_1-i_2|+|j_1-j_2|$ is the Manhattan distance between $(i_1,j_1),(i_2,j_2)\in V$.
\end{lemma}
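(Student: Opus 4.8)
The plan is to exploit the additive decomposition of the Manhattan distance into its two coordinates, which lets me treat rows and columns independently and reduce the statement directly to Lemma~\ref{lem;ga10}. For each $t\in[mn]$ write $\sigma^{-1}(t)=(i_t,j_t)$. Since $d((i_1,j_1),(i_2,j_2))=|i_1-i_2|+|j_1-j_2|$ splits as a sum, the total walk length decomposes as
\[
\sum_{t=1}^{mn-1} d(\sigma^{-1}(t),\sigma^{-1}(t+1))=\sum_{t=1}^{mn-1}|i_t-i_{t+1}|+\sum_{t=1}^{mn-1}|j_t-j_{t+1}|,
\]
and I would bound the two sums on the right separately.

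For the first sum, the key observation is that $(i_1,i_2,\ldots,i_{mn})$ is a permutation of the multiset consisting of $n$ copies of $[m]$. Indeed, because $\sigma$ is a bijection, the list $\sigma^{-1}(1),\ldots,\sigma^{-1}(mn)$ enumerates every vertex of $V$ exactly once; for each fixed $i\in[m]$ there are precisely $n$ vertices $(i,1),\ldots,(i,n)$, so the value $i$ occurs exactly $n$ times among $i_1,\ldots,i_{mn}$. Hence this sequence lies in the set $S(n,m)$ of Lemma~\ref{lem;ga10} (with the two parameters exchanged), and applying that lemma with $m$ replaced by $n$ and $n$ replaced by $m$ gives
\[
\sum_{t=1}^{mn-1}|i_t-i_{t+1}|\leq 2n\left\lfloor\frac{m}{2}\right\rfloor\left\lceil\frac{m}{2}\right\rceil-1_{m\textnormal{ is even}}.
\]

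Symmetrically, $(j_1,\ldots,j_{mn})$ is a permutation of the multiset with $m$ copies of $[n]$ — each column index $j\in[n]$ appearing exactly $m$ times — so it lies in $S(m,n)$, and Lemma~\ref{lem;ga10} applied directly yields
\[
\sum_{t=1}^{mn-1}|j_t-j_{t+1}|\leq 2m\left\lfloor\frac{n}{2}\right\rfloor\left\lceil\frac{n}{2}\right\rceil-1_{n\textnormal{ is even}}.
\]
Adding the two displayed bounds produces exactly the claimed inequality. I do not expect any genuinely hard step: the whole argument rests on the separability of the Manhattan distance together with Lemma~\ref{lem;ga10}. The only point requiring care is the bookkeeping of multiplicities — verifying that each row index appears $n$ times and each column index appears $m$ times — so that the two coordinate sequences are correctly identified with the appropriate instances of $S(\cdot,\cdot)$ and the parameter swap in the first application is carried out in the right order.
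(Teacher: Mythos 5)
Your proposal is correct and matches the paper's own proof essentially step for step: both decompose the Manhattan distance into its two coordinate sums, observe that the row indices form $n$ copies of $[m]$ and the column indices $m$ copies of $[n]$, and apply Lemma~\ref{lem;ga10} twice (once with the parameters swapped). Your extra care about the multiplicity bookkeeping is the same observation the paper makes, just spelled out more explicitly.
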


\begin{proof}
For $t\in[mn]$ write $(i(t),j(t))=\sigma^{-1}(t)$ and notice that the multiset $\{i(t)\;:\;t\in[mn]\}$ is equal to $n$ copies of $[m]$ and the multiset $\{j(t)\;:\;t\in[mn]\}$ is equal to $m$ copies of $[n]$. Thus, by Lemma \ref{lem;ga10}, 
\begin{align}
\sum_{t=1}^{mn-1} d( \sigma^{-1}(t),\sigma^{-1}(t+1))&=\sum_{t=1}^{mn-1} |i(t+1)-i(t)|+\sum_{t=1}^{mn-1} |j(t+1)-j(t)|\nonumber\\&\leq 2n\left\lfloor \frac{m}{2}\right\rfloor \left\lceil\frac{m}{2}\right\rceil-1_{m \textnormal{ is even}}+2m\left\lfloor \frac{n}{2}\right\rfloor \left\lceil\frac{n}{2}\right\rceil-1_{n \textnormal{ is even}}.\nonumber\qedhere
\end{align}
\end{proof}

\subsection{The lower bound}

The proof of the lower bound consists of four lemmas, whose easy proofs we omit.

\begin{lemma}\label{lem;222}
Assume that $m=2$ and that $n$ is odd. Let $\sigma\colon V\to [2n]$ be the bijection given by \[\left[\begin{array}{ccccccccc}
n-1 & \cdots & 4 & 2 & n+1 & 2n & n+3 & \cdots & 2n-2\\
2n-1 & \cdots & n+4 & n+2 & 1 & 3 & 5 & \cdots & n
\end{array}\right].\] Then \[\sum_{t=1}^{mn-1} d( \sigma^{-1}(t),\sigma^{-1}(t+1))=(n+1)^2-4.\]
\end{lemma}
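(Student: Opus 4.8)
The plan is to split the walk length $\sum_{t=1}^{2n-1} d(\sigma^{-1}(t),\sigma^{-1}(t+1))$ into a row part and a column part. Writing $(i(t),j(t))=\sigma^{-1}(t)$, the summand factors as $|i(t+1)-i(t)|+|j(t+1)-j(t)|$, so I would evaluate $\sum_t|i(t+1)-i(t)|$ and $\sum_t|j(t+1)-j(t)|$ separately and add them.

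For the row part, I would first read off from the two rows of the displayed array that (using $n$ odd) the top row consists precisely of the even numbers $2,4,\ldots,2n$ and the bottom row of the odd numbers $1,3,\ldots,2n-1$. Hence $i(t)=1$ exactly when $t$ is even and $i(t)=2$ exactly when $t$ is odd, so consecutive labels $t,t+1$ always lie in opposite rows. Therefore $|i(t+1)-i(t)|=1$ for every $t$, and the row part equals $2n-1$.

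For the column part, I would record $j(t)$ as an explicit piecewise-linear function of $t$ by inverting the array, with center column $c=(n+1)/2$. On the first block $t=1,\ldots,n$ the columns zigzag outward from the center, namely $c,\,c-1,\,c+1,\,c-2,\,c+2,\,\ldots,\,1,\,n$, so the consecutive differences are exactly $1,2,\ldots,n-1$, contributing $\binom{n}{2}=n(n-1)/2$. The single step from $t=n$ (column $n$) to $t=n+1$ (column $c$) contributes $(n-1)/2$. On the second block $t=n+1,\ldots,2n$ the columns again alternate between the left and right halves; a short computation gives consecutive differences $1,3,4,5,\ldots,n-1$ followed by a final step into column $c+1$ (the position of the label $2n$) of size $c=(n+1)/2$, and summing this shifted arithmetic progression yields $(n^2-3)/2$. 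Adding the three pieces gives a column part of $n^2-2$, whence the total is $(n^2-2)+(2n-1)=(n+1)^2-4$, as claimed.

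The routine part is the bookkeeping of the piecewise formula for $j(t)$ across the small-even, small-odd, and large-even, large-odd segments of the array. The only genuine subtlety is in the second block: the label $2n$ occupies column $c+1$ and so breaks the otherwise regular zigzag, which means the final difference must be isolated as a boundary term rather than absorbed into the arithmetic series. Once that term is separated out, the remaining differences form a clean progression and the sum is immediate.
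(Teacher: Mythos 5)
Your proof is correct: every difference sequence you record checks out against the array --- the parity split of the rows giving a row part of $2n-1$, the first-block column differences $1,2,\ldots,n-1$ summing to $n(n-1)/2$, the bridging step of $(n-1)/2$ from column $n$ back to $c=(n+1)/2$, and the second-block differences $1,3,4,\ldots,n-1$ plus the final step of size $(n+1)/2$ caused by the label $2n$ sitting in column $c+1$, for a column part of $n^2-2$ and a total of $(n^2-2)+(2n-1)=(n+1)^2-4$. The paper omits this proof entirely (the four lower-bound lemmas are stated with ``easy proofs we omit''), so there is no argument to compare against; your direct row/column decomposition with the boundary term isolated is precisely the routine verification the author evidently had in mind.
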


\begin{lemma}\label{lem;888}
Assume that $m$ and $n$ are both even and set $r=mn/2$. Let $\sigma\colon V\to [mn]$ be the bijection given by
\[\left[
\begin{array}{cccc|cccc}
r-1 & \cdots & \cdots & \cdots & \cdots & \cdots & \cdots & mn-1\\
\vdots &  &  & \vdots & \vdots &  &  & \vdots\\
\cdots & \cdots & 3 & 1 & r+1 & r+3 & \cdots & \cdots\\\hline
\cdots & \cdots & \cdots & mn & r & \cdots & \cdots & \cdots\\
\vdots &  &  & \vdots & \vdots &  &  & \vdots\\
r+2 & r+4 & \cdots &\cdots & \cdots & \cdots & 4 & 2
\end{array}\right].\] Then
\[
\sum_{t=1}^{mn-1} d( \sigma^{-1}(t),\sigma^{-1}(t+1))=\frac{mn(m+n)}{2} - 3.\]
\end{lemma}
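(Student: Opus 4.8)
The plan is to split the walk length into its vertical and horizontal contributions,
\[ \sum_{t=1}^{mn-1} d(\sigma^{-1}(t),\sigma^{-1}(t+1)) = \sum_{t=1}^{mn-1}|i(t+1)-i(t)| + \sum_{t=1}^{mn-1}|j(t+1)-j(t)|, \]
and to evaluate each of these two sums by the standard ``peaks and valleys'' identity: for any sequence $a_1,\dots,a_N$,
\[ \sum_{t=1}^{N-1}|a_{t+1}-a_t| = \sum_{t=1}^{N} c_t\, a_t, \]
where $c_t=\operatorname{sgn}(a_t-a_{t-1})+\operatorname{sgn}(a_t-a_{t+1})$ for $1<t<N$ (so $c_t=+2$ at a strict local maximum, $-2$ at a strict local minimum, and $0$ across a monotone or flat stretch), while $c_1,c_N$ are the corresponding one-sided signs. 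The point I would stress at the outset is that, thanks to the block structure, the fine snake ordering inside the four quadrants never enters: only the quadrant of each label (hence the ``level'' of each of its two coordinates) together with a handful of boundary values is needed.

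First I would record two structural facts that are immediate from the displayed matrix. (i) The odd labels occupy the top half (rows $1,\dots,m/2$) and the even labels the bottom half, so $i(1),i(2),\dots$ alternates perfectly between the low range $\{1,\dots,m/2\}$ and the high range $\{m/2+1,\dots,m\}$; since every low value is strictly below every high value, each interior $i(t)$ is a strict local extremum, whence $c_t=+2$ on bottom cells and $c_t=-2$ on top cells. (ii) For the columns there are two phases: for $t\le r$ the odd labels sit in the left half and the even labels in the right half, while for $t>r$ this is reversed; hence $j(1),j(2),\dots$ also alternates between low $\{1,\dots,n/2\}$ and high $\{n/2+1,\dots,n\}$, with the single exception of the seam $t=r,\,t=r+1$, where the construction places both $r$ and $r+1$ in column $n/2+1$, so that $j(r)=j(r+1)=n/2+1$ and the step between them is flat.

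For the vertical sum, inserting (i) into the identity and using that the multiset $\{i(t)\}$ equals $n$ copies of $[m]$ gives
\[ \sum_{t=1}^{mn-1}|i(t+1)-i(t)| = 2n\Big(\sum_{k=m/2+1}^{m}k-\sum_{k=1}^{m/2}k\Big) + \big(i(1)-i(mn)\big), \]
the correction $i(1)-i(mn)$ arising because the endpoints carry $c_1=-1$, $c_{mn}=+1$ instead of the generic $\mp2$. Reading $i(1)=m/2$ and $i(mn)=m/2+1$ off the matrix, the bracket equals $m^2/4$ and the correction equals $-1$, so the vertical sum is $nm^2/2-1$, exactly the maximum permitted by Lemma~\ref{lem;ga10}. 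I would then run the same computation for the horizontal sum with $\{j(t)\}$ equal to $m$ copies of $[n]$, the only difference being that the seam forces $c_r=c_{r+1}=+1$ rather than $+2$ (the flat step contributing $0$); combined with the endpoint values $j(1)=j(mn)=n/2$, the four anomalous coefficients produce a total defect of $-2$ relative to the fully alternating value $mn^2/2$, yielding $mn^2/2-2$. Adding the two pieces gives $nm^2/2-1+mn^2/2-2=mn(m+n)/2-3$.

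The only genuinely delicate step is the seam bookkeeping in the horizontal sum: one must verify that $r$ and $r+1$ do land in the same column, so that the intervening step is flat, and then tally correctly the four positions whose coefficient deviates from $\pm2$ (the two at the seam and the two endpoints). Everything else reduces to evaluating the arithmetic progressions $\sum_{k=1}^{n/2}k$ and $\sum_{k=n/2+1}^{n}k$ and their $m$-analogues, which is routine.
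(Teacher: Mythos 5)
Your proof is correct. Note that the paper itself gives no argument here: this is one of the four lower-bound lemmas whose ``easy proofs'' are explicitly omitted, so your peaks-and-valleys computation is not a deviation from the paper's route but a complete verification of what the paper leaves to the reader. I checked the points you flagged as delicate: the label $r$ sits at $(m/2+1,n/2+1)$ and $r+1$ at $(m/2,n/2+1)$, so the seam step is indeed flat and $c_r=c_{r+1}=+1$ (the left neighbours $j(r-1)=1$ and $j(r+2)=1$ lie in the low half); the endpoint coefficients are $c_1=-1$, $c_{mn}=+1$ for the row sequence (with $i(1)=m/2$, $i(mn)=m/2+1$, since $i(2)=m$ and $i(mn-1)=1$) and $c_1=c_{mn}=-1$ for the column sequence (with $j(1)=j(mn)=n/2$, both low cells, next to $j(2)=j(mn-1)=n$); and the resulting tallies $nm^{2}/2-1$ for the rows, $mn^{2}/2-2$ for the columns (net seam-plus-endpoint defect $-(n+2)+n=-2$), sum to $mn(m+n)/2-3$ as required. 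A pleasant byproduct of your bookkeeping, worth stating: the row sum attains the bound of Lemma \ref{lem;ga10} exactly while the column sum falls short of it by exactly $1$, which matches the paper's upper bound $mn(m+n)/2-2$ and explains the residual gap of $1$ in part (1) of Theorem \ref{thm;ag1}.
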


\begin{lemma}\label{lem;889}
Assume that $m,n\geq 3$ are both odd and set $r=(m+1)(n-1)/2$. Let $\sigma\colon V\to [mn]$ be the bijection given by
$$\left[
\begin{array}{cccc|c|cccc}
r-1&\cdots&\cdots&\cdots&r+1&mn-2&\cdots&\cdots&\cdots\\
\vdots&&&\vdots &\vdots&\vdots&&&\vdots\\
\vdots&&&\vdots&r+m-2&\cdots&\cdots&r+m+2&r+m\\
\cline{5-9}
\cdots&\cdots&3&1&mn&r&\cdots&\cdots&\cdots\\\cline{1-5}
mn-1&\cdots&\cdots&\cdots&r+2&\vdots&&&\vdots\\
\vdots&&&\vdots&\vdots&\vdots&&&\vdots\\
\cdots&\cdots&r+m+3&r+m+1&r+m-1&\cdots&\cdots&4&2
\end{array}\right].$$ Then 
\[
\sum_{t=1}^{mn-1} d( \sigma^{-1}(t),\sigma^{-1}(t+1))=\frac{mn(m+n)}{2}-\frac{m+n}{2}-1.\]
\end{lemma}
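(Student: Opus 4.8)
The plan is to split the Manhattan distance into its two coordinates and evaluate each piece by counting crossings, mirroring the upper bound. Writing $(i(t),j(t))=\sigma^{-1}(t)$, linearity gives
\[
\sum_{t=1}^{mn-1} d(\sigma^{-1}(t),\sigma^{-1}(t+1))=\sum_{t=1}^{mn-1}|i(t+1)-i(t)|+\sum_{t=1}^{mn-1}|j(t+1)-j(t)|,
\]
so it suffices to evaluate the row-sum and the column-sum separately. For the row-sum I would use $|i(t+1)-i(t)|=\sum_{k=1}^{m-1}\mathbf 1[(i(t)\le k)\neq(i(t+1)\le k)]$, which turns $\sum_t|i(t+1)-i(t)|$ into $\sum_{k=1}^{m-1}c_k$, where $c_k$ is the number of steps $t\mapsto t+1$ whose two cells lie on opposite sides of the horizontal cut between rows $\{1,\dots,k\}$ and $\{k+1,\dots,m\}$. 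Since the two blocks contain $nk$ and $n(m-k)$ cells, the elementary fact that a binary word with $a$ and $b$ letters of its two kinds has at most $2\min(a,b)$ places where consecutive letters differ yields $c_k\le 2n\min(k,m-k)$; summing over $k$ recovers the row part $n(m^2-1)/2$ of the upper bound, and the column-sum is treated identically with bound $m(n^2-1)/2$.

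The task is therefore to show that the displayed $\sigma$ saturates every crossing count, except for a single unit of loss. Saturation of a given cut requires two things: (i) the cells of the \emph{minority} block never occur at two consecutive labels, so each is isolated in the walk; and (ii) both the first label $1$ and the last label $mn$ sit in the \emph{majority} block, which is what upgrades the bound from $2\min(a,b)-1$ to $2\min(a,b)$. Condition (i) is where the bulk of the bookkeeping lies, and I expect it to be the main obstacle. Within each quadrant of the grid the construction fills cells of a single parity in steps of $2$, so consecutive integer labels $t$ and $t+1$ generally lie in opposite quadrants; one must run through the resulting arithmetic runs, and in particular through the joints where they meet at the central row and column, and confirm that every such step crosses each cut from one extreme side to the other. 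This is routine but is the genuine combinatorial content, and it is exactly what the pinwheel shape of $\sigma$ is engineered to guarantee.

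Condition (ii) then pins down the additive $-1$ by a short, clean argument. Simultaneous saturation of \emph{all} horizontal cuts forces both $1$ and $mn$ into the unique row lying in the majority block of every cut, namely the central row $(m+1)/2$; in the construction both $1$ and $mn$ occupy that row, so each $c_k$ attains $2n\min(k,m-k)$ and the row-sum equals $n(m^2-1)/2$ exactly. The same reasoning for columns would demand that $1$ and $mn$ both lie in the central column $(n+1)/2$; here $mn$ does, but $1$ sits one column to its left. Consequently, for the single vertical cut separating the first $(n-1)/2$ columns from the rest, the cell carrying $1$ falls on the minority side, so that cut contributes at most $2m\cdot(n-1)/2-1$, one below its maximum, while every other vertical cut still saturates. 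Hence the column-sum equals $m(n^2-1)/2-1$.

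Adding the two totals gives $n(m^2-1)/2+m(n^2-1)/2-1=mn(m+n)/2-(m+n)/2-1$, which is precisely the asserted value, and is one less than the combined upper bound, as it must be. In short, the proof reduces to the mechanical verification of condition (i) along the arithmetic runs, after which the endpoint analysis of condition (ii) accounts cleanly for the single lost crossing and yields the stated sum.
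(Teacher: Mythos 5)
Your proposal is correct in outline and takes a genuinely different route from the paper, which omits the proof of this lemma entirely as ``easy'': the intended argument there is simply to read the explicit walk off the array (odd labels $1,3,\ldots,r-1$ in the top-left block alternating with even labels $2,4,\ldots,r$ in the bottom-right block, then the central column, then alternation between the top-right and bottom-left blocks) and sum the Manhattan distances phase by phase. You instead recycle the cut-crossing decomposition implicit in Lemma \ref{lem;ga10}: each coordinate increment is a sum of indicators of crossings of the $m-1$ horizontal and $n-1$ vertical cuts, a cut with minority side of size $a$ admits at most $2a$ crossings (at most $2a-1$ if an endpoint of the walk lies on the minority side), and the displayed $\sigma$ saturates every cut except the vertical cut at $k=(n-1)/2$, where the cell labeled $1$, at $((m+1)/2,(n-1)/2)$, lies on the minority side. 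This buys structural insight the direct computation lacks: it explains why the construction sits within $1$ of the upper bound and localizes the $-1$ to a single cut, and your endpoint analysis (saturating all horizontal cuts forces $1$ and $mn$ into the central row, which the construction obeys, while $1$ misses the central column by one) is accurate. The final arithmetic also checks out: $n(m^2-1)/2+m(n^2-1)/2-1=mn(m+n)/2-(m+n)/2-1$, and for $m=n=3$ the array yields a walk of total length $23$, matching the formula.

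One caveat: you defer your condition (i) as ``routine,'' but it is load-bearing, and not only for the saturated cuts --- your claim that the lossy cut contributes \emph{exactly} $m(n-1)-1$ also needs it, since the $2a-1$ bound alone gives only an inequality in the wrong direction for a lower-bound lemma. For the record, the check does go through cleanly: consecutive labels always alternate between diagonally opposite regions (labels $t,t+1$ with $t<r$ alternate top-left/bottom-right by parity; labels $r+1,\ldots,r+m-1$ alternate between the top and bottom halves of the central column; labels $r+m,\ldots,mn-1$ alternate top-right/bottom-left; the transitions $r\to r+1$, $r+m-1\to r+m$ and $mn-1\to mn$ are likewise fine), so for every cut each minority cell is isolated in the walk. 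Combined with $1$ and $mn$ lying in the central row and $mn$ in the central column, this gives exactly $2n\min(k,m-k)$ crossings for each horizontal cut, $2m\min(k,n-k)$ for each vertical cut $k\neq(n-1)/2$, and $m(n-1)-1$ for the exceptional one, completing your argument. In any written-up version this alternation check should be stated explicitly rather than waved at, but there is no gap in the idea.
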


\begin{lemma}\label{lem;8899}
Assume that $m\geq 3$ is odd, $n\geq 3$ is even, and set $r=(m+1)n/2-3$. Let $\sigma\colon V\to [mn]$ be the bijection given by
\[\left[
\begin{array}{cccc|cccc}
r &\cdots  &  &\cdots  &  \cdots&  \cdots& r+5 &r+3\\
\vdots &  & & \vdots & \vdots &  &  & \vdots\\
\vdots &  &  & \vdots&  mn-2&\cdots  &\cdots & \cdots \\\cline{6-8}\cline{0-0}
\multicolumn{1}{c|}{mn-1} & \cdots & 3 & 1 & \multicolumn{1}{c|}{mn} & r+1 &\cdots  & \cdots\\\cline{2-5}
mn-3 & \cdots&\cdots & \cdots & \vdots &  &  &\vdots \\
\vdots &  &  & \vdots & \vdots &  & & \vdots\\
\cdots &\cdots  &r+4  &r+2 & \cdots & \cdots & 4 &2
\end{array}\right].\] Then 
\[
\sum_{t=1}^{mn-1} d( \sigma^{-1}(t),\sigma^{-1}(t+1))=\frac{mn(m+n)}{2}-\frac{n}{2}-1.\]
\end{lemma}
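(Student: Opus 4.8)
The plan is to exploit the coincidence that the claimed value is exactly the upper bound proved above, decomposed coordinate by coordinate. Write $R=\sum_{t=1}^{mn-1}|i(t+1)-i(t)|$ and $C=\sum_{t=1}^{mn-1}|j(t+1)-j(t)|$ for the row and column parts of the total distance, where $(i(t),j(t))=\sigma^{-1}(t)$. As in the upper-bound proof, $\{i(t)\}$ is $n$ copies of $[m]$ and $\{j(t)\}$ is $m$ copies of $[n]$, so Lemma~\ref{lem;ga10} gives $R\le R_{\max}:=2n\lfloor m/2\rfloor\lceil m/2\rceil$ and $C\le C_{\max}:=2m\lfloor n/2\rfloor\lceil n/2\rceil-1$. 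For $m$ odd and $n$ even these are $R_{\max}=n(m^2-1)/2$ and $C_{\max}=mn^2/2-1$, and $R_{\max}+C_{\max}=mn(m+n)/2-n/2-1$ is precisely the target. Since the two bounds are separate, the total attains $R_{\max}+C_{\max}$ if and only if $R=R_{\max}$ and $C=C_{\max}$; hence it suffices to check that the displayed $\sigma$ realizes both coordinatewise maxima.

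I would verify each maximum by a level-crossing count. For a horizontal threshold $\ell\in[m-1]$ let $c_\ell$ be the number of steps $t\to t+1$ having one endpoint in rows $\le\ell$ and the other in rows $\ge\ell+1$; then $R=\sum_{\ell=1}^{m-1}c_\ell$. Because $\ell n$ of the labels sit in rows $\le\ell$ and $(m-\ell)n$ sit above, a binary-alternation bound gives $c_\ell\le 2n\min(\ell,m-\ell)$, with no $-1$ correction since $\ell n\ne(m-\ell)n$ for odd $m$, and summing over $\ell$ returns $R_{\max}$. Thus $R=R_{\max}$ is equivalent to the row-index sequence $i(1),\ldots,i(mn)$ crossing every horizontal line the maximal number of times. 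The same applies to $C$ and the vertical thresholds, except that the central one $\ell=n/2$ has equally many labels on each side and therefore loses exactly one crossing, which is what produces the $-1$ in $C_{\max}$. I would then read the two index sequences off the block structure of the array — small odd labels snaking through the left columns $1,\ldots,n/2$, the remaining labels through the right columns, and the rows split into a top and a bottom half joined by one distinguished middle row — and confirm that successive labels $t,t+1$ land in opposite halves in both coordinates, so that no crossing is wasted beyond the one forced at the centre.

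The main obstacle is the bookkeeping at the handful of seam positions, since this is exactly where the construction must neither gain nor lose more than the forced amount. Concretely, the large entries $mn,mn-1,mn-2,mn-3$ together with the special middle row are placed so as to stitch the top block to the bottom block and the left block to the right block while preserving full oscillation; the delicate check is that at these junctions every horizontal threshold is still crossed $2n\min(\ell,m-\ell)$ times and every vertical threshold $2m\min(\ell,n-\ell)$ times, save for the single central vertical crossing. Confirming that the snake orientations in the two blocks are compatible with the parities of $m$ and $n$ at the seam is the only genuinely fiddly point. A more mechanical alternative, bypassing the crossing count, is to observe that inside each block of each row the labels form arithmetic progressions of common difference $2$; the path $1\to 2\to\cdots\to mn$ then breaks into a bounded number of monotone runs whose Manhattan lengths telescope into elementary sums, and adding these to the few exceptional steps yields the stated total directly.
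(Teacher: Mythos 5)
Your reduction is correct and the arithmetic checks out: for $m$ odd and $n$ even, Lemma~\ref{lem;ga10} gives $R_{\max}=n(m^2-1)/2$ and $C_{\max}=mn^2/2-1$, their sum is exactly $mn(m+n)/2-n/2-1$, and since the two bounds are independent, the lemma is indeed equivalent to the assertion that the displayed $\sigma$ attains both coordinatewise maxima. The paper itself omits the proof as an easy direct computation, so your crossing-count reformulation is a legitimately different route, and an appealing one, since it explains the defects $-n/2$ and $-1$ as forced rather than accidental. The problem is that your proposal stops exactly where the content of the lemma begins: both verification strategies you offer (the per-threshold crossing count, and the telescoping of monotone runs) are announced but never executed, and you yourself leave the seam bookkeeping --- the placement of $mn, mn-1, mn-2, mn-3$ and the middle row --- flagged as ``the only genuinely fiddly point'' without resolving it. Since the lemma is precisely the claim that this one concrete labeling achieves the value, a proof must contain that trace; as written, nothing beyond the (already known) upper bound has been established.

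There is also a substantive flaw in the extremality criterion you propose to check. The condition ``successive labels land in opposite halves in both coordinates'' is correct for the column coordinate (with $n$ even, maximality of the central crossing count forces strict left--right alternation, losing exactly one crossing), but it is wrong for the row coordinate when $m$ is odd: the rows do not split into two halves, and the correct conditions are the per-threshold ones. Working those out, maximality at the two central horizontal thresholds $\ell=(m-1)/2$ and $\ell=(m+1)/2$ forces the labels $1$ and $mn$ to lie in the middle row $(m+1)/2$ --- a structural feature the construction visibly has and your criterion does not predict --- and it permits steps joining the middle row to the bottom row (as the small labels $1,2,3,4,\ldots$ in fact do), which your ``opposite halves'' test would reject. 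So a verification carried out literally as you describe would fail on the very labeling you are trying to certify, even though that labeling is extremal. To complete the argument along your lines you would need to (i) state the exact equality conditions for each threshold (every label on the strictly smaller side of a threshold has both walk-neighbors on the larger side, endpoints on the larger side; perfect alternation at the balanced vertical threshold), and (ii) trace the full label sequence through the four blocks and the middle row to confirm them, including at the seams. Alternatively, the direct computation the paper has in mind --- summing the constant-length steps within each oscillating phase and the finitely many exceptional steps --- is routine but must actually be written down.
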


\end{document}